\title{Galois points for double-Frobenius nonclassical curves}
\author{Herivelto Borges and Satoru Fukasawa}
\subjclass[2010]{14H50, 11G20}
\keywords{Galois point, Frobenius nonclassical curve, rational point}
\address{Universidade de S\~ao Paulo, Inst. de Ci\^encias Matem\'aticas
 e de Computa\c c\~ao, S\~ao Carlos, SP 13560-970,  Brazil.} 
\email{hborges@icmc.usp.br}
\thanks{The first author was partially supported by FAPESP grant 2017/04681-3}
\address{Department of Mathematical Sciences, Faculty of Science, Yamagata University, Kojirakawa-machi 1-4-12, Yamagata 990-8560, Japan.}
\email{s.fukasawa@sci.kj.yamagata-u.ac.jp} 
\thanks{The second author was partially supported by JSPS KAKENHI Grant Number 16K05088.}  
\newtheorem{theorem}{Theorem}
\newtheorem{proposition}{Proposition} 
\newtheorem{corollary}{Corollary}
\newtheorem{fact}{Fact}
\newtheorem{problem}{Problem} 
\theoremstyle{definition}
\newtheorem{remark}{Remark}
\begin{document}
\begin{abstract} 
We determine the distribution of Galois points for plane curves over a finite field of $q$ elements, which are Frobenius nonclassical for different powers of $q$. 
This family is an important class of plane curves  with many  remarkable properties. It contains the Dickson--Guralnick--Zieve curve, which has been recently studied by Giulietti, Korchm\'{a}ros, and Timpanella from  several points of view. 
A problem posed by the second author in the theory of Galois points is modified. 
\end{abstract}
\maketitle

\section{Introduction}  
Let $\mathbb F_q$ be a finite field with $q \ge 2$, and let $\mathcal{F} \subset \mathbb{P}^2$ be the plane curve defined by $F(x, y, z)=D_1(x, y, z)/D_2(x, y, z)$, where 
\begin{equation} \label{defining equation} 
D_1=\left|\begin{array}{ccc}
x & x^{q^m} & x^{q^n} \\
y & y^{q^m} & y^{q^n} \\ 
z & z^{q^m} & z^{q^n} 
\end{array} \right| \  \mbox{ and } \ 
D_2=\left|\begin{array}{ccc}
x & x^q & x^{q^2} \\
y & y^q & y^{q^2} \\ 
z & z^q & z^{q^2} 
\end{array} \right|,
\end{equation} 
and $n$ and $m$ are coprime.
According to \cite[p.544 and Theorem 3.4]{borges},  $F$ is a homogeneous polynomial of degree $q^n+q^m-q^2-q$ over $\mathbb{F}_q$, which   is irreducible over the algebraic closure $\overline{\mathbb F}_q$. 
In 2009, the first author characterized these curves as the unique double-Frobenius nonclassical plane curves for different powers $q^n$ and $q^m$, with ${\rm gcd}(n, m)=1$. Other significant features, such as a large number of $\mathbb{F}_{q^n}$-rational points (meeting the St\"ohr--Voloch bound) and the arc property  in the case $m=1$
 were also noted   \cite{borges}. 
This  important family of curves  contains the  newly coined  Dickson--Guralnick--Zieve (DGZ) curve: case  $(n, m)=(3,1)$. 
Additional remarkable properties of the DGZ  curve, such as a large automorphism group and positive $p$-rank,  have  been recently proved  by Giulietti, Korchm\'{a}ros, and Timpanella \cite{gkt}. 

In this article, we consider Galois points for the curves $\mathcal{F}$ over $\overline{\mathbb{F}}_q$ (see \cite{miura-yoshihara, yoshihara} for the definition of Galois point). 
The set of all Galois points for the plane curve $\mathcal{F}$ on the projective plane is denoted by $\Delta(\mathcal{F})$. 

Our main result is the following. 

\begin{theorem} 
Let $n \ge 3$ and $m \ge 1$ be integers such that $n>m$ and ${\rm gcd}(n, m)=1$. 
For the $(q^n, q^m)$-Frobenius nonclassical curve $\mathcal{F} \subset \mathbb{P}^2$, 
$$\Delta(\mathcal{F})=\emptyset \ \mbox{ or  } \ \mathbb{P}^2(\mathbb{F}_q).$$ 
The latter case occurs if and only if $(n, m)=(3,1)$ or $(3,2)$. 
\end{theorem}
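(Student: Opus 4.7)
\noindent\textit{Plan.} The first key observation is that $\mathrm{PGL}(3, \mathbb{F}_q)$ acts on $\mathcal{F}$ by $\mathbb{F}_q$-linear automorphisms of $\mathbb{P}^2$: for any $g \in \mathrm{GL}(3, \mathbb{F}_q)$, since Frobenius commutes with the $\mathbb{F}_q$-linear map $g$, the substitution $(x, y, z) \mapsto g \cdot (x, y, z)^{T}$ multiplies each of $D_1$ and $D_2$ by $\det(g)$, so $F = D_1/D_2$ is preserved. Consequently $\mathrm{PGL}(3, \mathbb{F}_q) \hookrightarrow \mathrm{Aut}(\mathcal{F})$, and $\Delta(\mathcal{F})$ is a union of orbits of this group. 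Since $\mathrm{PGL}(3, \mathbb{F}_q)$ is transitive on $\mathbb{P}^2(\mathbb{F}_q)$, the theorem reduces to (a) excluding Galois points in $\mathbb{P}^2(\overline{\mathbb{F}}_q) \setminus \mathbb{P}^2(\mathbb{F}_q)$, and (b) deciding whether $P_0 = (1 : 0 : 0)$ is a Galois point.

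For (a), I would use that every $\mathrm{PGL}(3, \mathbb{F}_q)$-orbit outside $\mathbb{P}^2(\mathbb{F}_q)$ has size at least $q(q-1)(q^2+q+1)$, with the minimum attained on $\mathbb{P}^2(\mathbb{F}_{q^2}) \setminus \mathbb{P}^2(\mathbb{F}_q)$. If a non-$\mathbb{F}_q$ point $P$ were Galois, its Galois group $G_P \subset \mathrm{Aut}(\mathcal{F})$ would have to fix $P$, but the subgroup of $\mathrm{PGL}(3, \mathbb{F}_q)$ fixing $P$ and acting trivially on the pencil of lines through $P$ is too small to account for $\deg(\pi_P)$; combined with the orbit-size lower bound, this should contradict known finiteness and upper bounds on $|\Delta(\mathcal{F})|$. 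For (b), the subgroup $H \subset \mathrm{PGL}(3, \mathbb{F}_q)$ fixing $P_0$ and every line through $P_0$ consists of matrices
$$\left(\begin{array}{ccc} a & b & c \\ 0 & 1 & 0 \\ 0 & 0 & 1 \end{array}\right), \qquad a \in \mathbb{F}_q^{\times},\ b, c \in \mathbb{F}_q,$$
so $|H| = q^2(q - 1)$. If $P_0$ is Galois, then $H$ embeds into $\mathrm{Gal}(\pi_{P_0})$ and $q^2(q - 1)$ must divide $\deg(\pi_{P_0}) = d - m_{P_0}$, where $d = q^n + q^m - q^2 - q$ and $m_{P_0}$ denotes the multiplicity of $P_0$ on $\mathcal{F}$. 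A local analysis of the factorization $F = D_1/D_2$ at $P_0$ should determine $m_{P_0}$, and the resulting divisibility constraint is expected to isolate exactly the pairs $(n, m) = (3, 1)$ and $(3, 2)$; in both surviving cases one finds $|H| = \deg(\pi_{P_0})$, and verifies directly that $H$ acts transitively on a general fiber of $\pi_{P_0}$, yielding the Galois property.

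The principal obstacle is this last step: determining $m_{P_0}$ cleanly for arbitrary $(n, m)$ and then, in the two remaining cases, upgrading the numerical match $|H| = \deg(\pi_{P_0})$ to an actual transitivity statement on fibers. A secondary technical difficulty in step (a) is making the orbit-versus-Galois-bound comparison uniform in $n$; this may require supplementing the orbit argument with a direct input from the $(q^n, q^m)$-Frobenius nonclassical property, which constrains how tangent lines of $\mathcal{F}$ interact with Frobenius images and hence with any hypothetical Galois group at a non-$\mathbb{F}_q$ point.
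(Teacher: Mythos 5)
Your positive direction (showing $\mathbb{P}^2(\mathbb{F}_q)\subset\Delta(\mathcal{F})$ when $(n,m)=(3,1)$ or $(3,2)$) is essentially the paper's argument: $PGL(3,\mathbb{F}_q)$ acts on $\mathcal{F}$, the subgroup $H$ of order $q^2(q-1)$ fixing a chosen $\mathbb{F}_q$-point and every line through it lies in ${\rm Aut}(\hat{\mathcal{F}}/\mathbb{P}^1)$, and $|H|=\deg\hat{\pi}_{P_0}=q^3-q^2$ forces the extension to be Galois (no separate transitivity check is needed). But the negative direction, which is the bulk of the theorem, has two genuine gaps. First, your mechanism for excluding $\mathbb{F}_q$-points when $(n,m)\neq(3,1),(3,2)$ fails numerically: by the singularity data, $\deg\hat{\pi}_{P_0}=q^n-q^2=q^2(q^{n-2}-1)$ in the relevant cases, and $q^2(q-1)$ divides this for \emph{every} $n\ge 3$ since $(q-1)\mid(q^{n-2}-1)$. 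So the divisibility constraint $|H|\mid\deg\hat{\pi}_{P_0}$ isolates nothing; what is actually needed is a proof that $P_0$ is \emph{not} Galois for $n>3$ (and for the excluded $(n,m)$ with $n=m+1$ or $m+2$), which requires ramification-theoretic input, not group order counting.

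Second, your exclusion of Galois points outside $\mathbb{P}^2(\mathbb{F}_q)$ rests on two unjustified premises: that the Galois group $G_P$ at a Galois point consists of automorphisms induced by elements of $PGL(3,\mathbb{F}_q)$ (for a singular curve, $G_P$ is a priori only a group of automorphisms of the function field, and even ${\rm Aut}(\mathcal{F})$ is not known to equal $PGL(3,\mathbb{F}_q)$ for general $(n,m)$), and that there are applicable ``known finiteness and upper bounds on $|\Delta(\mathcal{F})|$'' — no such bounds exist for these singular curves independently of the analysis one is trying to carry out. The paper's actual engine, which your last sentence only gestures at, is the double-Frobenius nonclassicality: at a Galois point $P$, the tangent line at any smooth ramification point of $\hat{\pi}_P$ is forced to be both $\mathbb{F}_{q^n}$- and $\mathbb{F}_{q^m}$-rational, hence $\mathbb{F}_q$-rational since $\gcd(n,m)=1$. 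Combining this with the precise classification of singularities of $\mathcal{F}$ (multiplicities $q^m$, $q^m-1$, $q^m-q$ according to position relative to $\mathbb{F}_q$-lines) and the Galois-covering constraints that all ramification indices over a point coincide and divide $\deg\hat{\pi}_P$, one derives arithmetic contradictions case by case ($n-m>2$, $=2$, $=1$; inner smooth, outer, and singular $P$). Without this mechanism, or a substitute for it, the exclusion step does not go through.
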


Since the result for the case where $(n, m)=(3,1)$ or $(3, 2)$ gives a negative answer to the problem \cite[Problem 1]{fukasawa2}, posed by the second author,  it  is modified as follows. 

\begin{problem} 
Let $\mathcal{C}$ be a plane curve over $\mathbb F_q$. 
Assume that $\Delta(\mathcal{C})=\mathbb P^2(\mathbb F_q)$. 
Then, is it true that $\mathcal{C}$ is projectively equivalent to the Hermitian, Ballico--Hefez or the $(q^n, q^m)$-Frobenius nonclassical curve of type $(n, m)=(3, 1)$ or $(3, 2)$?
Or, more basically, is $\mathcal{C}$ Frobenius nonclassical?
\end{problem} 

The result in \cite{fukasawa3} is contained in this article.   

\section{Preliminaries}
Let $\mathcal{C} \subset \mathbb{P}^2$ be an irreducible plane curve of degree $d$, and let $r:\hat{\mathcal{C}} \rightarrow \mathcal{C}$ be the normalization. 
For different points $P$ and $Q \in \mathbb{P}^2$, the line passing through $P$ and $Q$ is denoted by $\overline{PQ}$. 
For a point $P \in \mathbb{P}^2$, $\pi_P:\mathcal{C} \dashrightarrow \mathbb{P}^1$ represents the projection from $P$.  
The composite map $\pi_P \circ r: \hat{\mathcal{C}} \rightarrow \mathbb{P}^1$ is denoted by $\hat{\pi}_P$. 
The ramification index at $\hat{Q} \in \hat{\mathcal{C}}$ is represented by $e_{\hat{Q}}$. 
When $r^{-1}(Q)$ consists of a unique point $\hat{Q} \in \hat{\mathcal{C}}$, the index $e_{\hat{Q}}$ is denoted also by $e_Q$.

\begin{fact} \label{ramification}
For the projection $\hat{\pi}_P$, the following holds. 
\begin{itemize}
\item[(a)] For each point $\hat{Q} \in \hat{C}$ with $Q=r(\hat{Q}) \ne P$, it follows that $e_{\hat{Q}}={\rm ord}_{\hat{Q}}h_{PQ}$, where $h_{PQ}$ is a linear polynomial defining the line $\overline{PQ}$. 
\item[(b)] If $P \in \mathcal{C} \setminus {\rm Sing}(\mathcal{C})$, then $e_{P}={\rm ord}_{P}h_{P}-1$, where $h_{P}$ is a linear polynomial defining the tangent line at $P$. 
\item[(c)] If $P$ is an ordinary singularity of $\mathcal{C}$ with multiplicity $m(P)$, that is, $P$ has $m(P)$ tangent lines, then for each tangent line defined by $h=0$ at $P$, there exists a unique point $\hat{P} \in r^{-1}(P)$ such that $e_{\hat{P}}={\rm ord}_{\hat{P}}h-1$.  
\end{itemize}
\end{fact}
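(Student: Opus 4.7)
The plan is to reduce to a convenient coordinate system and then compute the ramification index as the order of vanishing of the pullback of a local parameter on $\mathbb{P}^1$. After a projective change of coordinates, I would put $P = (0:0:1)$, so that $\pi_P(x:y:z) = (x:y)$. With this choice, a line through $P$ has the form $\{bx - ay = 0\}$, and a local parameter on $\mathbb{P}^1$ at the point $(a:b)$ is represented by the linear functional $bx-ay$ (divided, in a given affine chart, by a linear form nonvanishing at $(a:b)$). The ramification index $e_{\hat{R}}$ of $\hat{\pi}_P$ at $\hat{R}$ is, by definition, the order of vanishing of the pullback of such a local parameter along $\hat{\pi}_P$ at $\hat{R}$.

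For part (a), write $Q = (a:b:c)$ with $(a,b) \neq (0,0)$ since $Q \neq P$, so that $\hat{\pi}_P(\hat{Q}) = (a:b)$. The linear form $bx-ay$ cuts out precisely the line $\overline{PQ}$, so up to a unit at $\hat{Q}$ (namely the nonvanishing denominator coming from the affine chart), the pullback equals $h_{PQ}$. Taking orders yields $e_{\hat{Q}} = \mathrm{ord}_{\hat{Q}}\, h_{PQ}$. For part (b), assume additionally that $P$ is a smooth point with tangent line $\{y=0\}$, so $h_P = y$. Let $t$ be a local parameter at $P$ on $\hat{\mathcal{C}}$; smoothness and the fact that $y=0$ is the tangent give $\mathrm{ord}_P x = 1$ and $\mathrm{ord}_P y = e \geq 2$, where $e = \mathrm{ord}_P h_P$. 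Then in the affine chart where $x\neq 0$, the map $\hat{\pi}_P$ is expressed by $y/x$, which has order $e-1$ at $P$. Hence $e_P = e - 1 = \mathrm{ord}_P h_P - 1$.

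For part (c), I would exploit the definition of an ordinary singularity: the $m(P)$ branches at $P$ are smooth and have pairwise distinct tangent lines. Given a tangent line $\{h=0\}$ at $P$, there is thus a unique branch, corresponding to a unique $\hat{P} \in r^{-1}(P)$, whose tangent is $h=0$. After a linear change of coordinates sending $h$ to $y$, the local analysis of part (b) applies verbatim to this single branch: choosing a local parameter $t$ at $\hat{P}$, the smoothness of the branch gives $\mathrm{ord}_{\hat{P}} x = 1$ and $\mathrm{ord}_{\hat{P}} y = \mathrm{ord}_{\hat{P}} h$, and computing $y/x$ yields $e_{\hat{P}} = \mathrm{ord}_{\hat{P}} h - 1$. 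The only subtlety I expect is the verification in (c) that the individual branches at an ordinary singularity are smooth with the claimed unique-tangent-per-branch correspondence, but this is essentially the definition of ``ordinary multiple point'' and poses no real obstruction once invoked.
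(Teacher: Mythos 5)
The paper states this as a \emph{Fact} with no proof and no citation---it is treated as standard background on projections---so there is no internal argument to compare against. Your proof is correct and is exactly the standard local computation one would write down: normalize $P=(0:0:1)$, identify a local parameter at a point of the target $\mathbb{P}^1$ with $(bx-ay)/\ell$ for a linear form $\ell$ nonvanishing at the relevant point, and read off the ramification index as an order of vanishing. Parts (a) and (b) are complete as written (the key points being $\mathrm{ord}_P(x/z)=1$ because $x=0$ is not the tangent, and that the extended map sends a smooth $P$ to its tangent direction). The one item you flag in (c)---that the branches of an ordinary singularity are smooth with pairwise distinct tangents---does follow immediately: the multiplicity $m(P)$ is the sum of the branch multiplicities, each branch has a single tangent, and $m(P)$ distinct tangents force $m(P)$ branches each of multiplicity $1$. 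With that observation supplied, your argument is a complete and correct proof of the statement the paper leaves unproved.
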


According to \cite[Proposition 3.2]{borges}, the following holds. 

\begin{fact} \label{singularities}
Let $\mathcal{F} \subset \mathbb{P}^2$ be the $(q^n, q^m)$-Frobenius nonclassical curve, and let $S \subset \mathbb{P}^2$ be the set of all points $R$ such that $R$ is contained in some $\mathbb{F}_q$-line. 
\begin{itemize}
\item[(a)] For the case where $m>1$, ${\rm Sing}(\mathcal{F})=\mathbb{P}^2(\mathbb{F}_{q^{n-m}})$.  
Let $Q=r(\hat{Q}) \in {\rm Sing}(\mathcal{F})$ be of multiplicity $m(Q)$. 
\begin{itemize}
\item[(i)] If $Q \not\in S$, then $m(Q)=q^m$, $r^{-1}(Q)=\{\hat{Q}\}$, and there exists a unique line $T_Q \ni Q$ such that ${\rm ord}_{\hat{Q}}T_Q=q^m+1$. 
\item[(ii)] If $Q \in S$ and $Q \not\in \mathbb{P}^2(\mathbb{F}_q)$, then $m(Q)=q^m-1$, $r^{-1}(Q)=\{\hat{Q}\}$, and there exists a unique line $T_Q \ni Q$ such that ${\rm ord}_{\hat{Q}}T_Q=q^m$. 
\item[(iii)] If $Q \in \mathbb{P}^2(\mathbb{F}_q)$, then $m(Q)=q^m-q$, $r^{-1}(Q)$ consists of exactly $m(Q)$ points, and $Q$ is an ordinary singularity. 
For each tangent line $T$ at $Q$, the intersection multiplicity of $\mathcal{F}$ and $T$ at $Q$ is equal to $q^n-q$. 
\end{itemize} 
\item[(b)] For the case where $m=1$ and $q>2$, ${\rm Sing}(\mathcal{F})=\mathbb{P}^2(\mathbb{F}_{q^{n-1}}) \setminus \mathbb{P}^2(\mathbb{F}_q)$. 
Let $Q=r(\hat{Q}) \in {\rm Sing}(\mathcal{F})$ be of multiplicity $m(Q)$. 
\begin{itemize}
\item[(i)] If $Q \not\in S$, then $m(Q)=q$, $r^{-1}(Q)=\{\hat{Q}\}$, and there exists a unique line $T_Q \ni Q$ such that ${\rm ord}_{\hat{Q}}T_Q=q+1$. 
\item[(ii)] If $Q \in S$ and $Q \not\in \mathbb{P}^2(\mathbb{F}_q)$, then $m(Q)=q-1$, $r^{-1}(Q)=\{\hat{Q}\}$, and there exists a unique line $T_Q \ni Q$ such that ${\rm ord}_{\hat{Q}}T_Q=q$. 
\end{itemize}
\item[(c)] For the case where $(m, q)=(1, 2)$, ${\rm Sing}(\mathcal{F})=\mathbb{P}^2(\mathbb{F}_{2^{n-1}})\setminus S$.  
If $Q=r(\hat{Q}) \in {\rm Sing}(\mathcal{F})$ with multiplicity $m(Q)$, then $m(Q)=2$, $r^{-1}(Q)=\{\hat{Q}\}$, and there exists a unique line $T_Q \ni Q$ such that ${\rm ord}_{\hat{Q}}T_Q=3$. 
\end{itemize}
\end{fact}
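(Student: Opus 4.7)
The plan is to analyze the projection $\hat\pi_P\colon\hat{\mathcal F}\to\mathbb P^1$ via Facts~\ref{ramification} and~\ref{singularities} under the Galois condition that all ramification indices in each fiber coincide. Since $\mathcal F$ is defined over $\mathbb F_q$, its linear automorphism group contains (an image of) $\mathrm{PGL}_3(\mathbb F_q)$, which acts transitively on $\mathbb P^2(\mathbb F_q)$ and preserves Galois-ness. Hence $\Delta(\mathcal F)\cap\mathbb P^2(\mathbb F_q)$ is automatically either empty or all of $\mathbb P^2(\mathbb F_q)$, and the theorem reduces to showing (A) that $\Delta(\mathcal F)\subseteq\mathbb P^2(\mathbb F_q)$, and (B) that a point of $\mathbb P^2(\mathbb F_q)$ is Galois exactly when $(n,m)\in\{(3,1),(3,2)\}$.

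For (B) with $m\ge 2$: by Fact~\ref{singularities}(a)(iii), $P\in\mathbb P^2(\mathbb F_q)$ is an ordinary singular point of multiplicity $q^m-q$, each of whose $q^m-q$ tangents meets $\mathcal F$ at $P$ with multiplicity $q^n-q$. Using Fact~\ref{ramification}(c), the ramification of $\hat\pi_P$ at the special branch of each tangent $T_P$ equals $q^n-q^m$, while the residual $\mathcal F\cap T_P$ outside $P$ contributes total degree $d-(q^n-q)=q^m-q^2$ to the same fiber; constancy of ramification, together with $q^n-q^m>q^m-q^2$ when $m>2$, forces $m=2$. For $m=1$, where $P\notin\mathcal F$, the analogous argument applies on an $\mathbb F_q$-line through $P$: all singular points on such a line are of $S$-type by Fact~\ref{singularities}(b)(ii), and constancy of ramification imposes a divisibility involving $d=q^n-q^2$, $q$, and $q-1$. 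A second step, examining an additional fiber (a non-tangent line in the $m=2$ case, a non-$\mathbb F_q$-line in the $m=1$ case) and using the catalogue of singularity types in Fact~\ref{singularities}, then rules out $n>3$: once $n-m>1$, a line through $P$ necessarily meets singular points of mixed types (tangent vs.\ transversal, or $S$-type vs.\ non-$S$-type), yielding incompatible ramification indices in the same fiber. Sufficiency for $(3,1)$ and $(3,2)$ is obtained by exhibiting an explicit Galois subgroup inside the $\mathrm{PGL}_3(\mathbb F_q)$-stabilizer of $P$: for $(3,1)$ one uses the automorphism group of the DGZ curve from \cite{gkt}, while for $(3,2)$ an analogous construction works. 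In both cases $n-m=1$ is the crucial feature, forcing the tangent line at each $S$-type singularity to coincide with an $\mathbb F_q$-line, which is what renders each fiber of $\hat\pi_P$ uniform.

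For (A), I would assume $P\notin\mathbb P^2(\mathbb F_q)$ were Galois and produce a line $\ell$ through $P$ whose intersection with $\mathcal F$ contains two singular points of incompatible ramification indices from Fact~\ref{singularities}: an $S$-type point with $e=q^m-1$ and a non-$S$-type point with $e=q^m$. If $P\notin S$ then no $\mathbb F_q$-line passes through $P$, and a generic line through $P$ meets singularities of both types; if $P\in S\setminus\mathbb P^2(\mathbb F_q)$ then $P$ lies on a unique $\mathbb F_q$-line $\ell_0$, and comparing fibers on $\ell_0$ with those of a line through $P$ not in $S$ yields the same incompatibility. The main obstacle will be this last step: the existence of a line with the required mixed incidence is a delicate combinatorial matter, resting on the sizes of $\mathbb P^2(\mathbb F_{q^{n-m}})$ and $S$ and the specific geometry of $\mathbb F_q$-lines through $P$, and it may require separate treatment of exceptional small-$q$ cases.
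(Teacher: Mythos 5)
Your proposal does not address the statement at hand. The statement to be proved is Fact~\ref{singularities}: a complete description of the singular locus of $\mathcal{F}$ (namely that it equals $\mathbb{P}^2(\mathbb{F}_{q^{n-m}})$, or that set minus $\mathbb{P}^2(\mathbb{F}_q)$ or minus $S$ in the cases $m=1$, $q>2$ and $(m,q)=(1,2)$), together with the multiplicities $m(Q)$, the number of points in each fiber $r^{-1}(Q)$ of the normalization, and the orders ${\rm ord}_{\hat{Q}}T_Q$ of the tangent lines at the branches. What you have written is instead a sketch of the paper's \emph{main theorem} on the distribution of Galois points: you discuss $\Delta(\mathcal{F})$, the $\mathrm{PGL}_3(\mathbb{F}_q)$-action, ramification of $\hat{\pi}_P$, and the cases $(n,m)=(3,1),(3,2)$. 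None of the claims of Fact~\ref{singularities} is established anywhere in your text. Worse, your argument repeatedly invokes Fact~\ref{singularities} itself as an input (``by Fact~\ref{singularities}(a)(iii)\ldots'', ``all singular points on such a line are of $S$-type by Fact~\ref{singularities}(b)(ii)'', ``the catalogue of singularity types in Fact~\ref{singularities}''), so as a purported proof of that fact it is circular.

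For the record, the paper does not prove this statement either: it is quoted verbatim from Borges, \emph{On multi-Frobenius non-classical plane curves}, Proposition~3.2 (reference \cite{borges}). An actual proof would have to work directly with the defining polynomial $F=D_1/D_2$ from (\ref{defining equation}): compute the partial derivatives to locate the singular points, use the Frobenius-nonclassicality relations for the two powers $q^n$ and $q^m$ to determine local expansions at points of $\mathbb{P}^2(\mathbb{F}_{q^{n-m}})$, distinguish the three cases according to whether $Q$ lies on an $\mathbb{F}_q$-line or is $\mathbb{F}_q$-rational, and then analyze the branches through each singular point to get $\#r^{-1}(Q)$ and the tangent orders. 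None of this local analysis appears in your proposal, so there is no overlap with what needs to be shown.
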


If $P$ is a Galois point, then the following holds (see \cite[III.7.2]{stichtenoth}). 

\begin{fact} \label{galois-ramification}
If the projection $\hat{\pi}_P: \hat{\mathcal{C}} \rightarrow \mathbb{P}^1$ is a Galois covering, then we have the following.
\begin{itemize}
\item[(a)] If $\hat{Q}_1$ and $\hat{Q}_2 \in \hat{\mathcal{C}}$ have the same image, then $e_{\hat{Q}_1}=e_{\hat{Q}_2}$.  
\item[(b)] For each point $\hat{Q} \in \hat{\mathcal{C}}$, the index $e_{\hat{Q}}$ divides the degree $\deg \hat{\pi}_{P}$. 
\end{itemize}
\end{fact}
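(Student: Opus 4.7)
The plan is to combine the large $\mathbb{F}_q$-linear symmetry of $\mathcal{F}$ with the explicit ramification data of Fact~\ref{singularities} and the Galois constraints of Fact~\ref{galois-ramification}.

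First, because both $D_1$ and $D_2$ transform equivariantly under $\mathbb{F}_q$-linear changes of coordinates, $F$ is $\mathrm{PGL}_3(\mathbb{F}_q)$-invariant up to scalar; hence $\mathrm{PGL}_3(\mathbb{F}_q) \subseteq \mathrm{Aut}(\mathcal{F})$. Since this group acts transitively on $\mathbb{P}^2(\mathbb{F}_q)$, either every $\mathbb{F}_q$-rational point is a Galois point or none is. This already delivers the dichotomy $\Delta(\mathcal{F}) \cap \mathbb{P}^2(\mathbb{F}_q) \in \{\emptyset,\, \mathbb{P}^2(\mathbb{F}_q)\}$.

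To prove $\Delta(\mathcal{F}) \subseteq \mathbb{P}^2(\mathbb{F}_q)$, I would assume $P \notin \mathbb{P}^2(\mathbb{F}_q)$ is Galois and derive a contradiction. Using the abundance of type-(i) singularities of Fact~\ref{singularities}, select $Q_0, Q_1 \in \mathrm{Sing}(\mathcal{F}) \setminus S$ with $P \notin T_{Q_0}$ and $P \in T_{Q_1}$ (a straightforward line/point count produces such $Q_1$). By Fact~\ref{ramification}(a) the ramification indices of $\hat{\pi}_P$ at the unique preimages are $e_{\hat{Q}_0} = q^m$ and $e_{\hat{Q}_1} = q^m + 1$, and Fact~\ref{galois-ramification}(b) then forces $q^m(q^m+1) \mid \deg \hat{\pi}_P$, the two factors being coprime. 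Writing $\deg \hat{\pi}_P = d - m(P)$ with $d = q^n + q^m - q^2 - q$ and $m(P) \in \{0, 1, q^m - 1, q^m\}$ (the only possibilities for a point outside $\mathbb{P}^2(\mathbb{F}_q)$ by Fact~\ref{singularities}), the reductions modulo $q^m$ and modulo $q^m+1$ (using $q^m \equiv -1 \pmod{q^m+1}$) become linear congruences in $q$. For $m \ge 2$ these are immediately incompatible with every admissible $m(P)$; for $m = 1$ a small residual case (essentially $P \notin \mathcal{F}$ with $n$ even) is eliminated by bringing in the type-(ii) singularities of Fact~\ref{singularities}(b), whose ramification indices $q^m - 1$ or $q^m$ supply one extra divisibility relation sufficient to close the argument.

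Applied now to $P \in \mathbb{P}^2(\mathbb{F}_q)$, the same two-singularity template (with $m(P) = q^m - q$ from Fact~\ref{singularities}(a)(iii), and the analogues in (b), (c) for $m = 1$) pares the divisibility conditions down to exactly $(n, m) = (3, 1)$ and $(3, 2)$. For these two exceptional pairs I would conclude by exhibiting a concrete $\mathbb{F}_q$-rational Galois point: $\mathrm{Stab}_{\mathrm{PGL}_3(\mathbb{F}_q)}(P)$ has order $q^3(q-1)^2(q+1)$ and contains a natural subgroup $G$ of order $\deg \hat{\pi}_P = q^2(q-1)$ acting on $\hat{\mathcal{F}}$, whose quotient realizes $\hat{\pi}_P$ and thereby certifies the Galois property. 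The main obstacle will be the arithmetic case analysis in the middle steps: an exhaustive enumeration of the admissible positions of $P$ and auxiliary singular points $Q$, matched against the divisibility conditions modulo $q^m$ and $q^m + 1$, with particular care for the small cases $m = 1$ and $q = 2$ that Fact~\ref{singularities} treats separately.
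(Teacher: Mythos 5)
Your proposal does not address the statement you were asked to prove. The statement is Fact~\ref{galois-ramification}, a general assertion about Galois coverings: (a) that ramification indices are constant on each fiber, and (b) that each ramification index divides $\deg \hat{\pi}_P$. Your text is instead a sketch of the paper's main theorem on the distribution of Galois points for $\mathcal{F}$, and it explicitly \emph{invokes} Fact~\ref{galois-ramification}(b) as an ingredient --- so as a proof of that fact it is circular, and as written it proves nothing about the statement at hand.

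For the record, the paper does not prove Fact~\ref{galois-ramification} either; it cites the standard reference \cite[III.7.2]{stichtenoth}. The underlying argument is classical: if $\hat{\pi}_P$ is a Galois covering with group $G$, then $G$ acts transitively on each fiber, and an automorphism carrying $\hat{Q}_1$ to $\hat{Q}_2$ preserves local intersection data, so $e_{\hat{Q}_1}=e_{\hat{Q}_2}$; this gives (a). For (b), the fundamental identity for the corresponding Galois extension of function fields reads $\deg \hat{\pi}_P = e f r$, where $e$ is the common ramification index over the given point of $\mathbb{P}^1$, $f$ is the residue degree (equal to $1$ over an algebraically closed constant field), and $r$ is the number of points in the fiber; hence $e_{\hat{Q}} \mid \deg \hat{\pi}_P$. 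If you intended to prove the main theorem instead, that is a separate task, and your sketch would then need to be checked against Sections 4--8 of the paper on its own merits.
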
 

Combining Facts \ref{ramification}(a), \ref{singularities} and \ref{galois-ramification}(a), we have the following. 

\begin{corollary} \label{singular ramification}
Let $\hat{\pi}_P: \hat{\mathcal{F}} \rightarrow \mathbb{P}^1$ be the projection, and let $Q=r(\hat{Q}) \in \mathcal{F} \setminus \{P\}$ be a singular point. 
\begin{itemize}
\item[(i)] If $Q$ is in the case (a)(i), (b)(i), or (c) of Fact \ref{singularities}, then $e_{\hat{Q}}=q^m$ or $q^m+1$. 
\item[(ii)] If $Q$ is in the case (a)(ii) or (b)(ii) of Fact \ref{singularities}, then $e_{\hat{Q}}=q^m-1$ or $q^m$. 
\item[(iii)] Assume that $P$ is a Galois point. 
If $Q$ is in the case (a)(iii) of Fact \ref{singularities}, then $e_{\hat{Q}}=1$. 
\end{itemize}
\end{corollary}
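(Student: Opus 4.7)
The plan is to convert the ramification indices at $\hat{Q}$ into intersection multiplicities of $\mathcal{F}$ with the secant line $\overline{PQ}$ via Fact \ref{ramification}(a), and then read off the possible values from the local geometry supplied by Fact \ref{singularities}. Part (iii) is where the Galois hypothesis is genuinely used, through Fact \ref{galois-ramification}(a).

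For parts (i) and (ii), the point $Q$ is unibranch, so $r^{-1}(Q)=\{\hat{Q}\}$ and the valuation $\textrm{ord}_{\hat{Q}} h_{PQ}$ coincides with the intersection multiplicity $I_Q(\mathcal{F},\overline{PQ})$. In all five subcases listed there is a \emph{unique} tangent line $T_Q$ at $Q$, so for any line $L$ through $Q$ the multiplicity $I_Q(\mathcal{F},L)$ takes exactly two values: it equals $\textrm{ord}_{\hat{Q}} T_Q$ when $L=T_Q$, and $m(Q)$ otherwise. Plugging in the values of $m(Q)$ and $\textrm{ord}_{\hat{Q}} T_Q$ supplied by Fact \ref{singularities} then gives $\{q^m,q^m+1\}$ in case (i) and $\{q^m-1,q^m\}$ in case (ii).

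For part (iii), $Q\in\mathbb{P}^2(\mathbb{F}_q)$ is an ordinary singularity with $m(Q)=q^m-q$ branches, each corresponding to one of the $m(Q)$ distinct tangent lines at $Q$. Note $m(Q)\ge 2$ since $m\ge 2$ and $q\ge 2$ in case (a)(iii). For any line $L$ through $Q$, the additivity $I_Q(\mathcal{F},L)=\sum_{\hat{Q}_i\in r^{-1}(Q)}\textrm{ord}_{\hat{Q}_i}h_L$ together with the ordinary singularity hypothesis shows that $\textrm{ord}_{\hat{Q}_i}h_L=1$ on every branch whose tangent is not $L$, while if $L$ happens to be the tangent to some branch $\hat{Q}_1$ then $\textrm{ord}_{\hat{Q}_1}h_L>1$. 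Applying this to $L=\overline{PQ}$: since the $m(Q)\ge 2$ tangent lines are distinct, $\overline{PQ}$ can be tangent to at most one branch, so if it were tangent to some branch we would get $e_{\hat{Q}_1}>1=e_{\hat{Q}_i}$ for $i\ne 1$, contradicting Fact \ref{galois-ramification}(a) applied to the preimages of $\pi_P(Q)\in\mathbb{P}^1$. Hence $\overline{PQ}$ is tangent to no branch and $e_{\hat{Q}}=1$ for every $\hat{Q}\in r^{-1}(Q)$.

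I do not anticipate a serious obstacle; the argument is essentially a bookkeeping exercise matching Fact \ref{ramification}(a) against the cases of Fact \ref{singularities}. The only genuine step is the symmetry argument in (iii), where the Galois hypothesis rules out the asymmetric configuration in which $\overline{PQ}$ is tangent to a single branch at an ordinary singular point.
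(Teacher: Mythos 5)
Your proposal is correct and follows exactly the route the paper intends: the paper states the corollary as an immediate consequence of combining Fact \ref{ramification}(a), Fact \ref{singularities}, and Fact \ref{galois-ramification}(a), and your write-up is simply a careful expansion of that combination (unibranch bookkeeping for (i)--(ii), and the equal-ramification argument from the Galois hypothesis to exclude tangency to a single branch in (iii)).
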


\section{Frobenius nonclassicality and Galois points}

\begin{proposition} \label{key prop}
Let $\mathcal{C} \subset \mathbb{P}^2$ be a $q$-Frobenius nonclassical curve over $\mathbb{F}_q$. 
Assume that $P$ is a Galois point for $\mathcal{C}$ and $Q \in \mathcal{C} \setminus {\rm Sing}(\mathcal{C})$ is a ramification point for the projection from $P$. 
\begin{itemize}
\item[(a)] If $P \in (\mathbb{P}^2 \setminus \mathcal{C}) \cup {\rm Sing}(\mathcal{C})$, then the line $\overline{PQ}$ is defined over $\mathbb{F}_q$.  
\item[(b)] If $P=Q$, then the tangent line $T_P\mathcal{C}$ at $P$ is defined over $\mathbb{F}_q$. 
\item[(c)] If $P \in \mathcal{C} \setminus {\rm Sing}(\mathcal{C})$ and there exists a point $Q' \in (\mathcal{C} \setminus {\rm Sing}(\mathcal{C})) \cap (\overline{PQ} \setminus \{P, Q\})$, then the line $\overline{PQ}$ is defined over $\mathbb{F}_q$.  
\end{itemize}
\end{proposition}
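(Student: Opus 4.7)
The plan is to combine $q$-Frobenius nonclassicality, which forces $\Phi_q(R) \in T_R\mathcal{C}$ for every smooth point $R$, with the Galois property, which through Fact~\ref{galois-ramification}(a) makes the ramification index constant along each fiber of $\hat{\pi}_P$. The aim is to deposit enough Frobenius iterates of $Q$ on the candidate line $L$ that one can read off $\Phi_q(L) = L$.

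First I identify $L$ case by case. In (a) and (c), $Q$ is smooth, ramified, and $Q \ne P$, so Fact~\ref{ramification}(a) gives ${\rm ord}_Q h_{PQ} = e_Q \ge 2$, whence $L := \overline{PQ}$ is the tangent $T_Q\mathcal{C}$. In (b), $e_P \ge 2$ together with Fact~\ref{ramification}(b) yields ${\rm ord}_P T_P\mathcal{C} \ge 3$, and $L := T_P\mathcal{C}$ is the relevant line. In all cases Frobenius nonclassicality places $\Phi_q(Q) \in L$ (respectively $\Phi_q(P) \in L$). If that tangent point happens to be $\mathbb{F}_q$-rational, then $L$ is already the tangent line at an $\mathbb{F}_q$-rational smooth point, hence defined over $\mathbb{F}_q$, and we are done.

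Assuming the tangent point is not $\mathbb{F}_q$-rational, I next produce a second smooth tangent point $Q'$ on $L$. In (c) this is the hypothesized $Q'$. In (a) and (b) I set $Q' := \Phi_q(Q)$ (resp.\ $\Phi_q(P)$); it is smooth since Frobenius preserves smoothness, distinct from $Q$ by assumption, and lies on $L$ by the previous step. In (a) it is also distinct from $P$, since $P \notin \mathcal{C} \setminus {\rm Sing}(\mathcal{C})$ while $Q'$ is smooth. Because $Q$ and $Q'$ lie in the same fiber of $\hat{\pi}_P$, Fact~\ref{galois-ramification}(a) gives $e_{Q'} = e_Q \ge 2$, and Fact~\ref{ramification}(a) (valid because $Q' \ne P$) promotes this to $L = T_{Q'}\mathcal{C}$. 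A second application of Frobenius nonclassicality then puts $\Phi_q(Q') \in L$, i.e., $\Phi_q^2(Q) \in L$ in cases (a) and (b).

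At this point $\Phi_q(L)$ is the line through two distinct points of $L$: namely $\Phi_q(Q) \ne \Phi_q(Q')$ in (c), and $\Phi_q(Q) \ne \Phi_q^2(Q)$ in (a) and (b) (distinctness using $Q \notin \mathbb{F}_q$). So $\Phi_q(L) = L$, which is the desired $\mathbb{F}_q$-rationality. The subtlety worth attention is the bookkeeping in (b): at $P$ the ramification index is measured with the $-1$ of Fact~\ref{ramification}(b), whereas at $Q' = \Phi_q(P) \ne P$ it is measured by Fact~\ref{ramification}(a) without that correction, so the Galois equality $e_{Q'} = e_P$ still delivers ${\rm ord}_{Q'} L = e_{Q'} \ge 2$, which is all one needs. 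Everything else is a direct combination of Facts~\ref{ramification}, \ref{galois-ramification} and the $q$-Frobenius nonclassical hypothesis.
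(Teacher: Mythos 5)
Your proof is correct and follows essentially the same route as the paper: $q$-Frobenius nonclassicality places the Frobenius image of the tangency point on $L$, and Fact~\ref{galois-ramification}(a) together with Fact~\ref{ramification}(a) turns that image (or the given $Q'$ in case (c)) into a second smooth tangency point of $L$. The only cosmetic difference is the concluding step: the paper gets $L=L^q$ from uniqueness of the tangent line at the smooth point $Q^q$, whereas you get it from $L$ and $L^q$ sharing the two distinct points $\Phi_q(Q)$ and $\Phi_q^2(Q)$ (resp. $\Phi_q(Q')$), which costs one extra application of nonclassicality but dispenses with the paper's reduction to the case $Q^q\neq P$.
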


\begin{proof} 
We prove assertions (a) and (c). 
Let $Q^q$ and $L^q$ be the $q$-Frobenius images of the point $Q$ and the line $L:=\overline{PQ}$, respectively. 
Considering Fact \ref{galois-ramification}(a), we need only prove the claim under the assumption that $Q^q \ne P$.  
Since $Q$ is a ramification point, it follows from Fact \ref{ramification}(a) that the line $L$ is tangent to $\mathcal{C}$ at $Q$. 
It follows that $L^q$ is tangent to $\mathcal{C}$ at $Q^q$.  
Now the $q$-Frobenius nonclassicality of $\mathcal{C}$ implies that $Q^q$ lies on $L$. 
The assumption $Q^q \ne P$ and Fact \ref{galois-ramification}(a) imply that $L$ is tangent to $\mathcal{C}$ at $Q^q$.
Hence $L=L^q$, that  is, $L$ is an $\mathbb{F}_q$-line.

We consider the case where $P$ is a ramification point of $\hat{\pi}_P$. 
By the Frobenius nonclassicality, $P^q \in T_P\mathcal{C}$. 
If $P^q \ne P$, then Fact \ref{galois-ramification}(a) implies that $P^q$ is a ramification point.  
Therefore, the tangent line $T_{P^q}\mathcal{C}$ is the same as $T_P\mathcal{C}$. 
Similar to the above proof, $T_P\mathcal{C}$ is $\mathbb{F}_q$-rational. 
\end{proof}

\begin{corollary} \label{key cor}
Let $\mathcal{F} \subset \mathbb{P}^2$ be the $(q^n, q^m)$-Frobenius nonclassical curve over $\mathbb{F}_q$. 
Assume that $P$ is a Galois point for $\mathcal{F}$ and $Q \in \mathcal{F} \setminus {\rm Sing}(\mathcal{F})$ is a ramification point for the projection from $P$. 
\begin{itemize}
\item[(a)] If $P \in (\mathbb{P}^2 \setminus \mathcal{F}) \cup {\rm Sing}(\mathcal{F})$, then the line $\overline{PQ}$ is defined over $\mathbb{F}_q$.  
\item[(b)] If $P=Q$, then the tangent line $T_P\mathcal{F}$ at $P$ is defined over $\mathbb{F}_q$.
\item[(c)] If $P \in \mathcal{F} \setminus {\rm Sing}(\mathcal{F})$ and there exists a point $Q' \in (\mathcal{F} \setminus {\rm Sing}(\mathcal{F})) \cap (\overline{PQ} \setminus \{P, Q\})$, then the line $\overline{PQ}$ is defined over $\mathbb{F}_q$.
\end{itemize} 
\end{corollary}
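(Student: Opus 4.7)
The plan is to reduce everything to Proposition~\ref{key prop} applied twice, using that a $(q^n, q^m)$-Frobenius nonclassical curve is, by its very definition, both $q^n$-Frobenius nonclassical and $q^m$-Frobenius nonclassical. The curve $\mathcal{F}$ is defined over $\mathbb{F}_q$, and hence over the larger fields $\mathbb{F}_{q^n}$ and $\mathbb{F}_{q^m}$. Crucially, the hypotheses on the point $P$ (being a Galois point), on $Q$ (being a smooth point of $\mathcal{F}$ that is a ramification point of $\hat{\pi}_P$), and on $Q'$ in case (c), are all geometric in nature and thus unchanged by base change.

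With this in place, I would apply Proposition~\ref{key prop} to $\mathcal{F}$ regarded as a $q^n$-Frobenius nonclassical curve over $\mathbb{F}_{q^n}$. In each of the three cases (a), (b), (c), the relevant line (resp.\ tangent line) must then be defined over $\mathbb{F}_{q^n}$. Running the same argument with $n$ replaced by $m$ yields that it is also defined over $\mathbb{F}_{q^m}$. Intersecting the two fields of definition and using ${\rm gcd}(n, m) = 1$, we obtain
\[
\mathbb{F}_{q^n} \cap \mathbb{F}_{q^m} \;=\; \mathbb{F}_{q^{{\rm gcd}(n, m)}} \;=\; \mathbb{F}_q,
\]
so the line is $\mathbb{F}_q$-rational.

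There is no real obstacle here: the substantive geometric content — that Frobenius nonclassicality forces tangent lines at ramification points to be Frobenius-stable — is already packaged inside Proposition~\ref{key prop}. The only new ingredient is the elementary Galois-theoretic fact that coprime extensions of $\mathbb{F}_q$ intersect in $\mathbb{F}_q$ itself, which is precisely what converts ``$\mathbb{F}_{q^n}$-rational and $\mathbb{F}_{q^m}$-rational'' into ``$\mathbb{F}_q$-rational''. The coprimality hypothesis ${\rm gcd}(n, m)=1$, which was already built into the definition of the curve $\mathcal{F}$, is exactly what drives this final step.
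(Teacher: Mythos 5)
Your proposal is correct and is essentially identical to the paper's own proof: the authors likewise apply Proposition~\ref{key prop} with $q$ replaced by $q^n$ and by $q^m$ (using that $\mathcal{F}$ is Frobenius nonclassical for both powers and defined over the respective fields), and then conclude from ${\rm gcd}(n,m)=1$ that the line is $\mathbb{F}_q$-rational. No gap.
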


\begin{proof}
By Proposition \ref{key prop}, the line $\overline{PQ}$ is $\mathbb{F}_{q^n}$-rational and $\mathbb{F}_{q^m}$-rational.
Since $n$ and $m$ are coprime, the line is defined over $\mathbb{F}_{q}$. 
\end{proof}

\section{Inner smooth Galois points} 
Let $P \in \mathcal{F} \setminus {\rm Sing}(\mathcal{F})$ be an inner Galois point. 
It follows from Fact \ref{ramification}(b) that $e_P=I_P(\mathcal{F}, T_P\mathcal{F})-1$ for the projection $\hat{\pi}_P$, where $I_P(\mathcal{F}, T_P\mathcal{F})$ is the intersection multiplicity of the curve $\mathcal{F}$ and the tangent line $T_P\mathcal{F}$ at $P$. 
Assume that $(m, q) \ne (1, 2)$. 
Then $I_P(\mathcal{F}, T_P\mathcal{F}) \ge q^m \ge 3$ (\cite[Theorem 2.6]{borges}), and hence $P$ is a ramification point.   
By Corollary \ref{key cor}(b), the tangent line $T_P\mathcal{F}$ is $\mathbb{F}_{q}$-rational.  
If $m>1$, then there exists an ordinary singularity on $T_P\mathcal{F}$ (Fact \ref{singularities}(a)), by Corollary \ref{singular ramification}(iii), and this is a contradiction to Fact \ref{galois-ramification}(a). 
If $m=1$ and $q>2$, then $T_P\mathcal{F}$ contains a singular point $Q$ with index $e_Q=q$ or $q-1$ (Corollary \ref{singular ramification}(ii)). 
It follows from Fact \ref{galois-ramification}(b) that $e_Q$ divides $\deg \hat{\pi}_P=q^n-q^2-1$. 
This is impossible. 

Assume that $(m, q)=(1, 2)$ and $n>3$. 
Since $n-m>2$, there exists a singular point $Q$ such that there does not exist an $\mathbb{F}_2$-line containing $Q$ (Fact \ref{singularities}(c)). 
Therefore, $\overline{PQ}$ is not an $\mathbb{F}_2$-line. 
Since $\deg \hat{\pi}_P=q^n-q^2-1$, it follows from Corollary \ref{singular ramification}(i) and Fact \ref{galois-ramification}(b) that $e_Q=3$. 
Since the tangent line at the point $Q$ is $\mathbb{F}_{2^{n-1}}$-rational, the line $\overline{PQ}$ is $\mathbb{F}_{2^{n-1}}$-rational. 
There exist at least two $\mathbb{F}_2$-lines intersecting $\overline{PQ}$ at points of $\mathcal{F}$ different from $P$. 
It follows from Fact \ref{singularities}(c) that such points are smooth points. 
According to Corollary \ref{key cor}(c), the line $\overline{PQ}$ is $\mathbb{F}_q$-rational. 
This is a contradiction. 

\section{The case where $n-m>2$} 

Assume that $n-m>2$ and $P \in (\mathbb{P}^2 \setminus \mathcal{F}) \cup {\rm Sing}(\mathcal{F})$ is a Galois point. 
Since $n-m>2$, there exists a singular point $Q \ne P$ not contained in any $\mathbb{F}_q$-line (Fact \ref{singularities}). 
Then $Q$ is a ramification point for $\hat{\pi}_P$ with index $e_Q=q^m$ or $q^m+1$ and the line $\overline{PQ}$ is not $\mathbb{F}_q$-rational. 
By Corollary \ref{key cor}(a), $\overline{PQ}$ does not contain a smooth point. 
It follows from Fact \ref{galois-ramification}(a) and Corollary \ref{singular ramification} that $\overline{PQ}$ is an $\mathbb{F}_{q^{n-m}}$-line and $\overline{PQ} \cap \mathcal{F}$ consists of only singular points. 
By considering the intersection points given by $\overline{PQ}$ and $\mathbb{F}_q$-lines, there exists a point $Q' \in \overline{PQ}$ such that $Q'$ is not $\mathbb{F}_q$-rational and is contained in some $\mathbb{F}_q$-line. 
It follows from Proposition \ref{singular ramification} that $e_{Q'}=q^m-1$ or $q^m$. 
By Fact \ref{galois-ramification}(a), $e_{Q'}=e_Q$ and hence  $e_{Q'}=e_Q=q^m$. 
Note that the number of points in $\overline{PQ} \cap \mathcal{F}$ is equal to $q^{n-m}+1$ or $q^{n-m}$. 

Assume that $P \in \mathbb{P}^2 \setminus \mathcal{F}$. 
Then $q^n+q^m-q^2-q=e_Q(q^{n-m}+1)$, or $e_Qq^{n-m}$. 
This is impossible. 

Assume that $P \in {\rm Sing}(\mathcal{F}) \setminus \mathbb{P}^2(\mathbb{F}_q)$.  
Since $e_Q=q^m$ divides $\deg \hat{\pi}_P$, by Fact \ref{singularities}, the multiplicity $m(P)$ is equal to $q^m$. 
Then  $q^n-q^2-q=e_Q(q^{n-m}+1)$, $e_Qq^{n-m}$, or $e_Q(q^{n-m}-1)$. 
This is also impossible. 

Assume that $P \in {\rm Sing}(\mathcal{F}) \cap \mathbb{P}^2(\mathbb{F}_q)$. 
By Fact \ref{singularities}, $m>1$. 
It follows from Fact \ref{singularities}(a) that $\deg \hat{\pi}_P=q^n-q^2$ and $(\overline{PQ} \setminus \{P\}) \cap \mathcal{F}$ consists of exactly $q^{n-m}$ singular points.  
According to \cite[Remark 3.3]{borges}, $\overline{PQ}$ is a tangent line at $P$. 
Since $P$ is an ordinary singularity, by Fact \ref{ramification}(c), the fiber of the point corresponding to $\overline{PQ}$ for $\hat{\pi}_P$ contains exactly $q^{n-m}+1$ points.  
It follows that $e_Q(q^{n-m}+1)=q^n-q^2$. 
This is impossible.

\section{The case where $n-m=2$}

Assume that $n-m=2$, $(m, q) \ne (1, 2)$ and $P \in (\mathbb{P}^2 \setminus \mathcal{F}) \cup {\rm Sing}(\mathcal{F})$ is a Galois point. 
Let $Q$ be a singular point different from $P$ that is contained in $\mathbb{P}^2(\mathbb{F}_{q^2}) \setminus \mathbb{P}^2(\mathbb{F}_q)$ (Fact \ref{singularities}). 
Since any $\mathbb{F}_{q^2}$-line contains an $\mathbb{F}_q$-point, $Q$ is a ramification point with index $q^m-1$ or $q^m$. 

Assume that $m>1$. 
It follows from Fact \ref{galois-ramification}(a) and Corollary \ref{singular ramification}(iii) that $\overline{PQ}\setminus \{P\}$ does not contain an ordinary singularity. 
Therefore, $\overline{PQ}\setminus \{P\}$ does not contain an $\mathbb{F}_q$-rational point. 
In particular, $\overline{PQ}$ is not $\mathbb{F}_q$-rational. 
By Corollary \ref{key cor}(a), $\overline{PQ}$ does not contain a smooth point. 
It follows from Fact \ref{galois-ramification}(a) and Corollary \ref{singular ramification} that $\overline{PQ}$ is an $\mathbb{F}_{q^2}$-line. 
Since any $\mathbb{F}_{q^2}$-line contains an $\mathbb{F}_q$-point, the point $P$ must be an $\mathbb{F}_q$-point, and hence a point with multiplicity $q^m-q$. 
Then $\overline{PQ}$ is a tangent line at $P$. 
Furthermore, the set $(\mathcal{F} \cap \overline{PQ}) \setminus \{P\}$ consists of exactly $q^{2}$ $\mathbb{F}_{q^{2}}$-rational but not $\mathbb{F}_q$-rational points. 
Note that there exists a unique point $\hat{P} \in r^{-1}(P)$ such that the image $\hat{\pi}_P(\hat{P})$ corresponds to the line $\overline{PQ}$. 
Therefore, the fiber $\hat{\pi}_P^{-1}(\overline{PQ})$ contains exactly $q^{2}+1$ points. 
It follows that $e_Q(q^{2}+1)=q^n-q^2$. 
This is impossible. 

Assume that $m=1$. 
Then $n=3$. 
If $\overline{PQ}$ contains a smooth point, then by Corollary \ref{key cor}(a), $\overline{PQ}$ is $\mathbb{F}_q$-rational. 
If $\overline{PQ}$ does not contain a smooth point, then it follows from Fact \ref{galois-ramification}(a) and Corollary \ref{singular ramification} that $\overline{PQ}$ is $\mathbb{F}_{q^2}$-rational. 
Therefore, $P$ is $\mathbb{F}_{q^2}$-rational. 
Assume that $P \in \mathbb{P}^2(\mathbb{F}_{q^2}) \setminus \mathbb{P}^2(\mathbb{F}_q)={\rm Sing}(\mathcal{F})$. 
Note that the tangent line $T$ at $P$ is defined over $\mathbb{F}_q$, and the tangent line at each singular point $R$ in $T \cap (\mathbb{P}^2(\mathbb{F}_{q^2}) \setminus \mathbb{P}^2(\mathbb{F}_q))$ is the same as $T$.  
Then  $e_R=q$. 
It follows from Fact \ref{galois-ramification}(b) that $e_R$ divides $\deg \hat{\pi}_P=q^3-q^2-(q-1)$. 
This is a contradiction. 

In conclusion, it follows that if $\Delta(\mathcal{F}) \ne \emptyset$, then $m=1$ and $\Delta(\mathcal{F}) \subset \mathbb{P}^2(\mathbb{F}_q) \subset \mathbb{P}^2 \setminus \mathcal{F}$. 

\begin{remark}
When $(n, m, q)=(3, 1, 2)$, the curve $\mathcal{F}$ is given by 
$$ F(x, y, z)=(x^2+xz)^2+(x^2+xz)(y^2+yz)+(y^2+yz)^2+z^4$$
(see \cite[p.542]{borges} or \cite[Remark 1]{gkt}). 
In this case, it is known that the claim follows (\cite[Theorem 4]{fukasawa1}). 
\end{remark} 

\section{The case where $n-m=1$}

Assume that $n-m=1$ and $P \in \mathbb{P}^2 \setminus \mathcal{F}$ is a Galois point. 
Since all singular points are ordinary singularities (Fact \ref{singularities}), by Corollary \ref{singular ramification}(iii), the projection from $P$ is not ramified at such points. 
Therefore, there exists a smooth point $Q \in \mathcal{F}$ that  is a ramification point. 
By Corollary \ref{key cor}(a), the line $\overline{PQ}$ is $\mathbb{F}_q$-rational. 
However, there exists an ordinary singularity in $\overline{PQ}$. 
This is a contradiction to Fact \ref{galois-ramification}(a). 

Assume that $n-m=1$, $m>2$ and $P \in {\rm Sing}(\mathcal{F})$ is a Galois point. 
Let $T$ be a tangent line at $P$. 
By Facts \ref{ramification}(c) and \ref{singularities}(a)(iii), there exists a ramification point $\hat{P} \in r^{-1}(P)$ contained in the fiber of the point corresponding to $T$ for the projection $\hat{\pi}_P$. 
Since $m>2$, there exists a point $Q \in (\mathcal{F} \cap T) \setminus \{P\}$ (Fact \ref{singularities}(a)(iii)). 
By \cite[Remark 3.3]{borges}, since $T$ is not an $\mathbb{F}_q$-line, $Q$ is a smooth point. 
By Fact \ref{galois-ramification}(a), $Q$ is a ramification point. 
It follows from Corollary \ref{key cor}(a) that $T$ is $\mathbb{F}_q$-rational.
This is a contradiction.

Accordingly, it follows that if $\Delta(\mathcal{F}) \ne \emptyset$, then $m=2$ and $\Delta(\mathcal{F}) \subset {\rm Sing}(\mathcal{F})=\mathbb{P}^2(\mathbb{F}_q)$.

\section{The case where $(n, m)=(3,1)$ or $(3,2)$}

It is easily verified that the projective linear group $PGL(3, \mathbb{F}_q)$ acts on $\mathcal{F}$ by the definitions of $D_1$ and $D_2$ as in (\ref{defining equation}) (see also \cite[Lemma 4.1]{gkt}). 
Therefore, the matrices
$$\sigma_{\gamma, \beta}=\left(\begin{array}{ccc}
1 & 0 & 0 \\
\gamma & 1 & \beta \\ 
0 & 0 & 1 
\end{array} \right) \ \mbox{ and } \ 
\tau_{\mu}=\left(\begin{array}{ccc}
1 & 0 & 0 \\
0 & \mu & 0 \\ 
0 & 0 & 1 
\end{array} \right) \in PGL(3, \mathbb{F}_q)$$
act on $\mathcal{F}$, where $\gamma, \beta \in \mathbb{F}_q$ and $\mu^{q-1}=1$. 
Note that a rational function $x/z$ is fixed by the actions of $\sigma_{\gamma, \beta}$ and $\tau_{\mu}$. 
This implies that $\pi_{P} \circ \sigma_{\gamma, \beta}=\pi_{P}$ and $\pi_{P} \circ \tau_{\mu}=\pi_{P}$, where $P=(0:1:0)$. 
Note also that $\deg \pi_P=q^3-q^2$ if and only if $(n, m)=(3, 1)$ or $(3, 2)$ (Fact \ref{singularities}). 
Considering the action of $PGL(3, \mathbb{F}_q)$, it is inferred that $\mathbb{P}^2(\mathbb{F}_q) \subset \Delta(\mathcal{F})$ holds, if $(n, m)=(3, 1)$ or $(3, 2)$.

\begin{remark}
\begin{itemize}
\item[(1)] If $(n, m)=(3, 1)$ or $(3, 2)$, then the associated Galois group $G_P$ of a Galois point $P \in \Delta(\mathcal{F})$ is isomorphic to the semidirect product $\mathbb{F}_q^{\oplus 2} \rtimes \mathbb{F}_q^*$, where the action of $\mathbb{F}_q^*$ on $\mathbb{F}_q^{\oplus 2}$ is given by $(\mu, (\gamma,  \beta)) \mapsto (\mu \gamma, \mu \beta)$. 
\item[(2)] All points in $\mathbb{P}^2(\mathbb{F}_q)$ are quasi-Galois points which are not Galois points, if $(n, m) \ne (3,1)$ or $(3,2)$ (see \cite{fmt} for the definition of quasi-Galois point). 
\end{itemize}
\end{remark}

\end{document}